\documentclass{article}
\usepackage{amssymb,amsmath,amsthm,graphicx}

\def\qed{\hfill {\hbox{${\vcenter{\vbox{               
   \hrule height 0.4pt\hbox{\vrule width 0.4pt height 6pt
   \kern5pt\vrule width 0.4pt}\hrule height 0.4pt}}}$}}}

\def\utr{\, \underline{\triangleright}\, }
\def\otr{\, \overline{\triangleright}\, }

\newtheorem{theorem}{Theorem}

\theoremstyle{definition}
\newtheorem{example}{Example}
\newtheorem{definition}{Definition}

\date{}

\title{\Large \textbf{Biquandle Fares and Link Invariants}}

\author{Sam Nelson\footnote{Email: Sam.Nelson@cmc.edu. Partially supported by Simons Foundation collaboration grant 702597.}\and
Stella Shah\footnote{Email: sshah0159@scrippscollege.edu}}

\begin{document}
\maketitle

\begin{abstract}
We introduce a new family of invariants of oriented classical and virtual 
knots and links using \textit{fares}, maps from paths in biquandle-colored
diagrams to an abelian coefficient group. We consider the cases of 1-fares 
and 2-fares, provide examples to show that the enhancements are proper 
and end with some open questions about the cases of $n$-fares for $n>2$.
\end{abstract}

\parbox{5.5in} {\textsc{Keywords:} biquandle fares, biquandles, quivers, link 
invariants

\smallskip

\textsc{2020 MSC:} 57K12}

\section{\large\textbf{Introduction}}\label{I}

\textit{Biquandles} are algebraic structures providing solutions to the
set-theoretic Yang-Baxter equation and invariants of oriented knots and links.
More precisely, every oriented classical or virtual knot or link
has a \textit{fundamental biquandle} whose isomorphism class is a strong
invariant and whose homsets into finite biquandles provide an infinite family
of computable invariants. These homsets can be represented (and computed) via
\textit{biquandle colorings} of diagrams analogous to matrices representing 
homomorphisms in a module category.

The \textit{biquandle counting invariant} is the cardinality of this homset.
An \textit{enhancement} of the biquandle counting invariant is a generally
stronger invariant which determines the homset cardinality. In much previous 
work, enhancements have been defined by finding invariants of homset
elements represented as coloring of diagrams and collecting the values of these
invariants over the homset; infinite families of such examples include the
biquandle 2-cocycle invariants \cite{CJKLS,CEGN}, biquandle bracket invariants 
\cite{NOR,GNO}, biquandle coloring quiver enhancements \cite{FN,JN} and 
many more \cite{NW}.

In this paper we introduce a new infinite family of enhancements of the 
biquandle homset invariant known as \textit{biquandle fares}, inspired by 
the path algebra of a quiver (and the Tokyo subway system). A fare of order
$n$ is a map from $n$-tuples of elements of $X$ representing paths of length
$n$ in a biquandle coloring of an oriented knot or link diagram interpreted
as a directed graph to an abelian group $A$. Summing the fare contributions
over an appropriate set of paths yields a value in $A$; the fare axioms are 
then the conditions required to ensure that this sum in not changed by 
$X$-colored Reidemeister moves. Collecting the total fares over the homset 
yields the enhanced invariant in multiset form, which can be represented as
a polynomial in various ways.

The paper is 
organized as follows. In Section \ref{BH} we review the basics of biquandles 
and homsets. In Section \ref{BF} we introduce the biquandle fare idea and 
illustrate with some 
toy examples. We handle the cases of fares of order $1,2$ and $3$ and discuss 
the case of higher order fares. For fares of order 2 and greater, we consider 
three basic types of fares: complete fares, through fares and crooked fares.
We compute tables of invariant values for some examples choices of biquandle,
coefficient ring and fare table. We conclude in Section \ref{Q} with some 
questions for future research.

This paper, including all text, illustrations, and python code for 
computations, was written strictly by the authors without the use of 
generative AI in any form.

\section{\large\textbf{Biquandles and Homsets}}\label{BH}

We begin by recalling a standard definition (see \cite{EN} for more).

\begin{definition}
A \textit{biquandle} is a set $X$ with two binary operations 
$\utr,\otr:X\times X\to X$ satisfying 
\begin{itemize}
\item[(i)] For all $x\in X$, $x\utr x=x\otr x$,
\item[(ii)] For all $y\in X$, the maps $\alpha_y,\beta_y:X\to X$ defined by
$\alpha_y(x)=x\otr y$, $\beta_y(x)=x\utr y$ and $S:X\times X\to X\times X$
defined by $S(x,y)=(y\otr x, x\utr y)$ are invertible, and
\item[(iii)] For all $x,y,z$, we have the \textit{exchange laws}
\[
\begin{array}{rcl}
(x\utr y)\utr (z\utr y) & = & (x\utr z)\utr (y\otr z) \\
(x\utr y)\otr (z\utr y) & = & (x\otr z)\utr (y\otr z) \\
(x\otr y)\otr (z\otr y) & = & (x\otr z)\otr (y\utr z) 
\end{array}.\]
\end{itemize}
A biquandle in which $x\otr y=x$ for all $x,y$ is called a \textit{quandle}.
\end{definition}

\begin{example}
Standard examples of biquandles include
\begin{itemize}
\item Any group has biquandle structures given by
\[x\utr y=y^{-n}xy^n \quad \mathrm{and}\quad x\otr y= x\]
for any integer $n$
\item Any group has biquandle structure given by
\[x\utr y=y^{-1}xy^{-1} \quad \mathrm{and}\quad x\otr y= x^{-1}\]
\item Any module over a commutative unital ring $R$ has biquandle
structures given by
\[x\utr y=tx+(s-t)y \quad \mathrm{and}\quad x\otr y= sx\]
for units $t,s\in R$.
\end{itemize}
\end{example}

The biquandle axioms are chosen so that for every \textit{$X$-coloring} of
an oriented classical or virtual knot or link diagram 
(i.e., assignment of elements of $X$ to the semiarcs of $D$ satisfying 
the condition
\[\scalebox{0.8}{\includegraphics{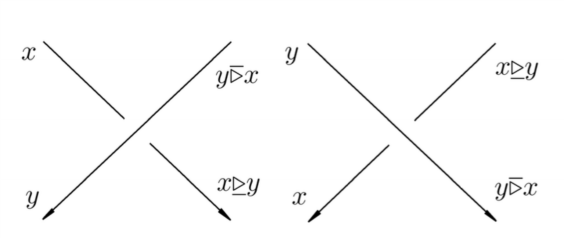}}\]
at every crossing) before a Reidemeister move, there is a unique $X$-coloring
of the diagram after the move which coincides with the original coloring
outside the neighborhood of the move.

An $X$-coloring determines and is determined by a unique biquandle 
homomorphism $f:\mathcal{B}(L)\to X$ from the \textit{fundamental biquandle}
of our oriented link $L$ to $X$, subject to a choice of generating set; see
\cite{EN,N25} for more. The set of $X$-colorings of a diagram of $L$ is called
the \textit{biquandle homset invariant} $\mathrm{Hom}(\mathcal{B}(L),X)$; 
its cardinality is known as the \textit{biquandle counting invariant}, denoted 
$\Phi_X^{\mathbb{Z}}(L)=|\mathrm{Hom}(\mathcal{B}(L),X)|$.

\begin{example}
Let $X$ be the biquandle given by the operation tables
\[
\begin{array}{r|rrr}
\utr & 1 & 2 & 3 \\ \hline
1 & 2 & 2 & 2 \\
2 & 1 & 1 & 1 \\
3 & 3 & 3 & 3 \\
\end{array}
\quad
\begin{array}{r|rrr}
\utr & 1 & 2 & 3 \\ \hline
1 & 2 & 3 & 1 \\
2 & 3 & 1 & 2 \\
3 & 1 & 2 & 3
\end{array}
\]
and consider the trefoil knot $3_1$. There are three $X$-colorings
of $3_1$, given by the homset
\[
\mathrm{Hom}(3_1,X)=\left\{
\raisebox{-0.55in}{\includegraphics{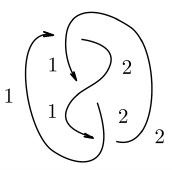}},\
\raisebox{-0.55in}{\includegraphics{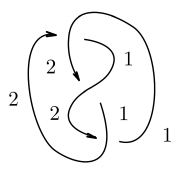}},\
\raisebox{-0.55in}{\includegraphics{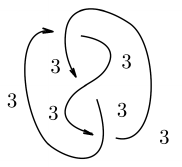}}
\right\}.
\]
Hence we have $\Phi_{X}^{\mathbb{Z}}(3_1)=3.$
\end{example}

\section{\large\textbf{Biquandle Fares}}\label{BF}

We begin with a definition.

\begin{definition}
Let $L$ be an oriented classical or virtual knot or link diagram.
A \textit{route} of order $n$ in $L$ is an ordered sequence of $n$
semiarcs in $L$ such that the terminal crossing point of the $k$th edge
is the initial crossing point of the $(k+1)$st edge for each $k=1,\dots, n-1$.
If we consider the underlying graph of the diagram $L$ as a quiver then
routes are the paths in the quiver.
\end{definition}

Let $X$ be a finite biquandle and $A$ and abelian group. 
A \textit{biquandle fare} of order $n$ with coefficients in $A$ is a linear map
$\phi:\mathbb{Z}[X^n]\to A$ satisfying axioms arising from the Reidemeister 
moves on $X$-colored diagrams where we interpret an $n$-tuple 
$(x_1,\dots,x_n)$ as corresponding to the biquandle colors on a route
of order $n$ in a biquandle coloring of a diagram $L$. More 
precisely, for each route of order $n$ we collect a fare contribution of 
$(-1)^k\phi(x_1,\dots,x_n)\in A$ where $k$ is the number of negative crossings 
internal to the route. 
\[\begin{array}{c}\includegraphics{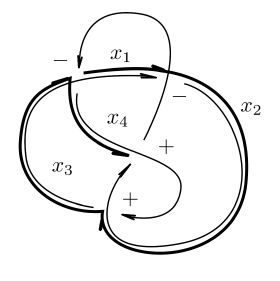} \\ 
+(-1)^2\phi(x_1,x_2,x_3,x_4)\end{array}\]

Given an $X$-colored
diagram, we find the set of all such contributions and take their sum in $A$ as 
the total fare of the diagram. The fare axioms are then chosen such that this 
sum of $\phi$-values in $A$ is invariant under $X$-colored Reidemeister moves 
and hence defines an enhancement of the biquandle homset invariant.

The simplest case is fares of order $n=1$.

\begin{definition}
Let $X$ be a finite biquandle and $A$ an abelian group. Then a 
\textit{biquandle fare} of order $n=1$ with coefficients in $A$ is a function
$\phi:X\to A$ satisfying the conditions
\begin{itemize}
\item[(i)] For all $x\in X$, $\phi(x\utr x)+\phi(x)=0$,
\item[(ii)] For all $x,y\in X$, $\phi(x)+\phi(y)+\phi(x\utr y)+\phi(y\otr x)=0$
and
\item[(iii)] For all $x,y,z\in X$, 
\[\phi(y)+\phi(x\utr y)+\phi(x\otr y)
=\phi(z\otr x)+\phi(x\utr z)+\phi((y\otr x)\utr(z\otr x))\]
\end{itemize}
\end{definition}

To compute the fare of an $X$-colored diagram we can simply replace the
biquandle colors $x$ on each edge with their values $\phi(x)$ in $A$ and sum
them up. To see that the above conditions give invariance under Reidemeister 
moves it suffices to check each move in a generating set of oriented 
Reidemeister moves. In \cite{P} several such sets are identified; we will use 
the (non-minimal) set consisting of all four moves of type I, all four moves of 
type II, and the all-positive move of type III.

\begin{definition}
Let $X$ be a finite biquandle, $D$ an oriented knot or link diagram representing
and oriented knot or link $L$, $A$ an abelian group and $\phi$ a biquandle 
fare. The multiset of of all appropriate fare values over the biquandle 
homset $\mathrm{Hom}(\mathcal{B}(L),X)$
is the \textit{biquandle fare multiset}, denoted $\Phi_X^{\phi,M}(L)$.
\end{definition}

\begin{theorem}\label{thm1}
Let $X$ be a biquandle, $L$ an oriented classical or virtual knot, link  
multi-knotoid, $A$ an abelian group and $\phi:X\to A$ a biquandle $1$-fare.
Then the multiset of fare values over the biquandle homset is invariant under
Reidemeister moves.
\end{theorem}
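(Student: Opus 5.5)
The strategy is to show that the total fare $\sum_{e}\phi(c(e))$ of an $X$-colored diagram is unchanged by each $X$-colored move in the generating set fixed above. This set consists of the four oriented type I moves, the four oriented type II moves, and the all-positive type III move (as in \cite{P}), together with the virtual and mixed moves in the virtual case. Once invariance of the total fare is established, the multiset statement is formal. Each $X$-colored Reidemeister move induces a bijection between the homsets of the diagrams before and after the move, since the coloring outside the move's neighborhood extends uniquely. So if corresponding colorings have equal fares, the multisets $\Phi_X^{\phi,M}$ coincide.

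For a single move I would compare only the semiarcs inside the neighborhood of the move. Semiarcs outside keep their colors and contribute identically on both sides. One subtlety is that a semiarc crossing the boundary of the neighborhood may be cut differently before and after the move. I would treat such a boundary semiarc as one edge whose color is unchanged, so it counts once on each side. Each move then reduces to an identity in $A$ among the $\phi$-values of the new or altered internal semiarcs:
\begin{itemize}
\item Type I: adding a kink with boundary color $x$ creates one extra semiarc, the loop, colored $x$, while the outgoing semiarc is colored $x\utr x=x\otr x$ by biquandle axiom (i). The change is $\phi(x)+\phi(x\utr x)$ or its analogue, and this vanishes by fare axiom (i).
\item Type II: two new crossings create new internal semiarcs colored $x\utr y$ and $y\otr x$ (or preimages under $\alpha$, $\beta$, $S$ for the other orientations). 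The net change is $\phi(x)+\phi(y)+\phi(x\utr y)+\phi(y\otr x)$ with appropriate signs, which vanishes by fare axiom (ii).
\item Type III: the three internal semiarcs on one side carry colors built from $x,y,z$ via the exchange laws. Comparing the two sides gives exactly fare axiom (iii).
\end{itemize}

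The main obstacle is the bookkeeping. I need to be careful about which semiarcs are genuinely new: for type I and II the move can split an existing semiarc into several, so the change may involve $\phi$ of the boundary colors as well. I also need to handle the other three type I and three type II orientations. For these I would first try a uniform argument: apply axioms (i) and (ii) to preimages under the invertible maps $\alpha_y$, $\beta_y$ and $S$, since the colors on one side of a differently oriented move are obtained from such preimages. I would check this case by case against the figures and derive any missing identities from (i) and (ii) by substitution.

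Virtual crossings do not change colors and only split semiarcs into pieces of the same color. If fares are counted on classical semiarcs, meaning arcs between classical crossings as the definition of route suggests, virtual moves leave the total unchanged trivially. Multi-knotoids add only endpoint semiarcs, which are never inside a move, so the same argument applies.
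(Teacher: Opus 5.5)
Your strategy is the paper's: check that the total fare is unchanged under the four type I moves, the four type II moves and the positive type III move, then pass to multisets through the bijection of colorings. Your type III discussion matches the paper's, and your remarks on virtual moves are fine. However, the accounting for types I and II is wrong as written, and that accounting is the whole content of the proof. You reach the correct identities only because two errors cancel.

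The first error is your rule that a semiarc crossing the boundary of the neighborhood ``counts once on each side''. It fails for these moves. On the side with fewer crossings, a strand through the disk is a single semiarc meeting the boundary twice. On the other side it becomes two boundary semiarcs of that same color, plus internal ones. Applied literally, your rule gives a change of $\phi(\ell)$ for a kink with loop color $\ell$, and $\phi(x\utr y)+\phi(y\otr x)$ for type II. Neither vanishes by the fare axioms.

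The second error is how you recover the missing type I term: you say the outgoing semiarc is colored $x\utr x$ while the boundary color is $x$. That coloring cannot occur. The incoming and outgoing semiarcs merge into one semiarc when the kink is removed, so both carry the same color $b$. The loop carries $\ell$, where $\{b,\ell\}=\{x,x\utr x\}$ for some $x$; which one is the loop depends on the kink. The correct count is $2\phi(b)+\phi(\ell)-\phi(b)=\phi(x)+\phi(x\utr x)$, which is the paper's $\phi(x)+\phi(x)+\phi(x\utr x)=\phi(x)$. Likewise, $\phi(x)+\phi(y)$ appears in type II because each strand's single semiarc is replaced by two outer semiarcs of its color plus an inner one. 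All four terms enter with plus signs: a route of order $1$ has no internal crossings, so ``with appropriate signs'' has no content here.

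Your last paragraph points at the right fix. To make it airtight, charge each semiarc to the point where it starts: a classical crossing, or a tail endpoint of a multi-knotoid. The total fare is then $\sum_c\bigl(\phi(o_1(c))+\phi(o_2(c))\bigr)$, plus the unchanged tail terms. Here $c$ ranges over classical crossings and $o_1(c),o_2(c)$ are its outgoing semiarcs. Crossingless components contribute nothing, and virtual crossings start no semiarcs.

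Starting points outside the disk keep their outgoing colors. This holds even when a move happens along an end semiarc of a knotoid, so your claim that such semiarcs are ``never inside a move'' is not needed. A move therefore changes the total only through the outgoing colors of the crossings it creates or removes:
\begin{itemize}
\item For a kink, the change is $\phi(b)+\phi(\ell)$.
\item For type II, in every orientation, the change is the sum of $\phi$ over the four semiarcs at either new crossing, whose colors are $x,y,x\utr y,y\otr x$ for some $x,y$. So axiom (ii) handles all four type II moves at once, with no preimages under $\alpha_y,\beta_y,S$ needed.
\item For type III, the three outgoing boundary colors agree by the exchange laws, leaving only the triangle.
\end{itemize}
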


\begin{proof}
Taking Reidemeister I, we see that we need 
$\phi(x)+\phi(x)+\phi(x\utr x)=\phi(x)$ or more simply 
$\phi(x)+\phi(x\utr x)=0$:
\[\includegraphics{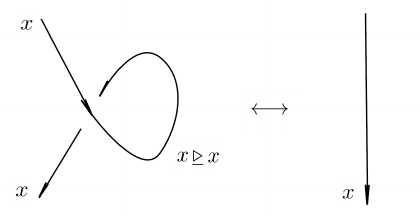}\]
The reader can check that the other three type I moves yield the same condition.

For Reidemeister II, we need 
\[\phi(x\utr y)+\phi(y)+\phi(x\utr y)+\phi(y\otr x)+\phi(x)+\phi(y)
=\phi(x\utr y)+\phi(y)\]
\[\includegraphics{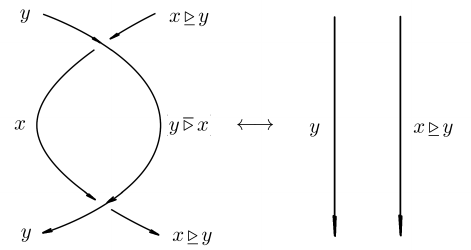}\]
which is equivalent to $\phi(x)+\phi(y)+\phi(x\utr y)+\phi(y\otr x)=0$.
As in the type I case, the reader can verify that all four type II moves
yield the same condition.

Finally, in the type III move we note that the colors on the outside of the
move neighborhood yield the same sum on both sides of the move by the biquandle
axioms, so we only need to consider the inner triangle contributions in the
$1$-fare case.
\[\includegraphics{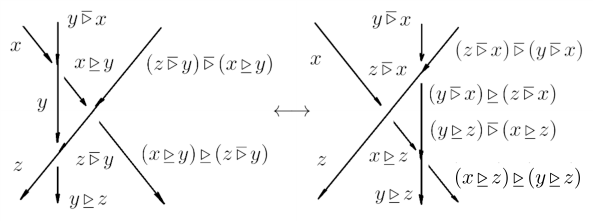}.\]
\end{proof}

We can express a 1-fare for a biquandle $X=\{x_1,\dots, x_n\}$
as an ordered $n$-tuple $\phi=(\phi(x_1),\dots, \phi(x_n))$.

\begin{example}
Let $X$ be the biquandle defined by the operation tables
\[
\begin{array}{r|rrr}
\utr & 1 & 2 & 3 \\ \hline
1 & 2 & 2 & 1 \\
2 & 1 & 1 & 2 \\
3 & 3 & 3 & 3
\end{array}
\quad \begin{array}{r|rrr}
\otr & 1 & 2 & 3 \\ \hline
1 & 2 & 2 & 2 \\
2 & 1 & 1 & 1 \\
3 & 3 & 3 & 3
\end{array}.
\]
Then we compute that $X$ has $1$-fares over $\mathbb{Z}_2$ given in the table
\[
\begin{array}{r|cccc}
x & \phi_1(x) & \phi_2(x) & \phi_3(x) & \phi_4(x) \\ \hline
1 & 0 & 0 & 1 & 1 \\
2 & 0 & 1 & 0 & 1 \\
3 & 0 & 1 & 0 & 1
\end{array}.
\]
\end{example}

We will denote the $1$-fare multiset invariant as $\Phi_{X}^{\phi,M}(L).$
We can optionally express the multiset in one of several ``polynomial'' 
formats (scare quotes because these are not necessarily polynomials in 
the usual sense, depending on the choice of abelian group $A$):

\begin{definition}
Let $X$ be a biquandle, $L$ an oriented classical of virtual knot or link,
$A$ and abelian group, $\phi:\mathbb{Z}[X^n]\to A$ a biquandle $n$-fare and
$\Phi_{X}^{\phi,M}(L)$ the $n$-fare multiset invariant. Then the 
\textit{additive $n$-fare polynomial} of $L$ with respect to $(X,A,\phi)$
is
\[\Phi_{X}^{\phi,+}(L)=\sum_{f\in\mathrm{Hom}(\mathcal{B}(L),X)} x^{\phi(f)}\]
and the 
\textit{multiplicative $n$-fare polynomial} of $L$ with respect to $(X,A,\phi)$
is
\[\Phi_{X}^{\phi,\times}(L)=\prod_{f\in\mathrm{Hom}(\mathcal{B}(L),X)} (x-\phi(f)).\]
\end{definition}

The two polynomial forms are equivalent over most rings. For the additive 
form, evaluation at $x=1$ yields the counting invariant value and
the coefficient of a monomial tells us the number of colorings whose
fare value is the monomial's exponent, while in the multiplicative form 
the counting invariant is the degree of the polynomial and the fares are 
its roots.

\begin{example}
Let $X$ be the biquandle defined by the operation tables
\[
\begin{array}{r|rrr}
\utr & 1 & 2 & 3 \\ \hline
1 & 3 & 1 & 3 \\
2 & 2 & 2 & 2 \\
3 & 1 & 3 & 1
\end{array}
\quad \begin{array}{r|rrr}
\otr & 1 & 2 & 3 \\ \hline
1 & 3 & 3 & 3 \\
2 & 2 & 2 & 2 \\
3 & 1 & 1 & 1 \\
\end{array}
\]
and let $A=\mathbb{Z}_2\oplus\mathbb{Z}_2$ be the Klein 4-group. Then
we compute that $X$ has 1-fares over $A$ including $\phi=((1,0),(0,1),(1,0))$.
Then the virtual Hopf link $L$ has three $X$-colorings
\[\includegraphics{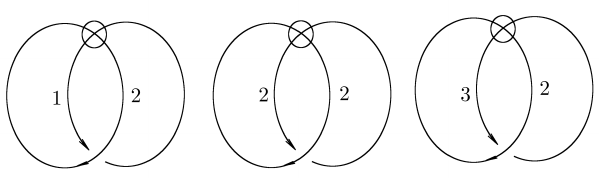}\]
Collecting the $1$-fares over this homset yields the $1$-fare multiset
invariant value $\Phi_X^{\phi,M}(L)=\{2\times (1,1), 1\times (0,0)\}$.
We note that this distinguishes the virtual Hopf link from the unlink $U$ since
a diagram without crossings has an empty set of routes and hence its multiset
$\Phi_{X}^{\phi,M}(U)$ can contain only zeros.
\end{example}

For $n>1$, several cases are possible; we will consider three.
For a \textit{complete fare} we collect 
contributions from all possible routes of length $n$. For a 
\textit{through fare} we collect contributions from routes of length $n$ 
continuing along the knot either over or under at each crossing. For a 
\textit{crooked fare} we collect contributions only from routes of length $n$
which turn at every vertex, never continuing across the crossing.

\begin{definition}
Let $X$ be a finite biquandle and $A$ an abelian group. Then a \textit{complete
biquandle fare} of order $n=2$ with coefficients in $A$ is a function
$\phi:X^2\to A$ satisfying the conditions
\begin{itemize}
\item[(i)] For all $x\in X$, 
\[\phi(x,x)+\phi(x,x\utr x)+\phi(x\utr x,x)+\phi(x\utr x,x\utr x)=0,\]
\item[(ii)] For all $x,y\in X$, 
\[\phi(x,y)+\phi(x,x\utr y)+\phi(y\otr x,y)+\phi(y\otr x,x\utr y)
-\phi(y,x)-\phi(x\utr y,x)-\phi(y,y\otr x)-\phi(x\utr y,y\otr x)=0\]
and
\item[(iii)] For all $x,y,z\in X$, 
\[\scalebox{0.95}{$\begin{array}{rcl}
\phi(x,y)
+\phi(x,x\utr y)
+\phi(y\otr x,y)
+\phi(y\otr x,x\utr y) 
& & \phi(x,z)+\phi(x,x\utr z)
+\phi(z\otr x,z)+\phi(z\otr x,x\utr z)\\
+\phi(y,z)+\phi(y,y\utr z)
+\phi(z\otr y,z)+\phi(z\otr y, y\utr z) 
& & +\phi(y\otr x,z\otr x)
+\phi(y\otr x,(y\otr x)\utr(z\otr x)) \\
+\phi(x\utr y,z\otr y)
+\phi(x\utr y, (x\utr y)\utr(z\otr y)) 
&= & +\phi((z\otr x)\otr(y\otr x),z\otr x)
+\phi((z\otr x)\otr(y\otr x),(y\otr x)\utr(z\otr x))\\
+\phi((z\otr y)\otr (x\utr y),z\otr y) 
& &
+\phi(x\utr z,y\utr z)
+\phi(x\utr z,(x\utr z)\utr(y\utr z))\\
+\phi((z\otr y)\otr (x\utr y),(x\utr y)\utr(z\otr y))
& & +\phi((y\utr z)\otr(x\utr z),y\utr z)
+\phi((y\utr z)\otr(x\utr z),(x\utr z)\utr(y\utr z))\\
\end{array}$}\]
\end{itemize}
\end{definition}

\begin{example}
Let $X$ be the biquandle given by the operation tables
\[
\begin{array}{r|rr}
\utr & 1 & 2 \\\hline
1 & 2 & 2 \\
2 & 1 & 1
\end{array}
\quad 
\begin{array}{r|rr}
\otr & 1 & 2 \\\hline
1 & 2 & 2 \\
2 & 1 & 1
\end{array}.
\] 
Then $X$ has complete 2-fares over $A=\mathbb{Z}_5$ including
\[\begin{array}{r|rr}
\phi & 1 & 2 \\\hline
1 & 0 & 4 \\
2 & 4 & 2
\end{array}.\]
The Hopf link $L2a1$ has four $X$-colorings
\[\includegraphics{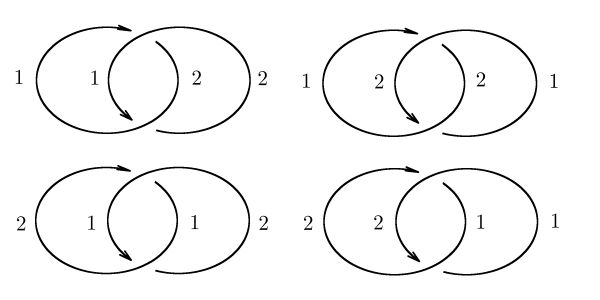}\]
and each coloring has a total of eight routes of order 2, resulting
in a multiset of four fares:
\[\begin{array}{rcl}
\Phi_{X}^{\phi,M} & = & \{\phi(1,1)+\phi(1,2)+\phi(2,1)+\phi(2,2)+\phi(1,1)+\phi(1,2)+\phi(2,1)+\phi(2,2), \\ & &
\phi(1,2)+\phi(1,2)+\phi(1,2)+\phi(1,2)+\phi(2,1)+\phi(2,1)+\phi(2,1)+\phi(2,1), \\ & &
\phi(2,1)+\phi(2,1)+\phi(2,1)+\phi(2,1)+\phi(1,2)+\phi(1,2)+\phi(1,2)+\phi(1,2),
\\ & & \phi(1,1)+\phi(1,2)+\phi(2,1)+\phi(2,2)+\phi(1,1)+\phi(1,2)+\phi(2,1)+\phi(2,2), \} \\
& = & \{0+4+4+2+0+4+4+2,\ 4+4+4+4+4+4+4+4, \\
&   & 4+4+4+4+4+4+4+4,\ 0+4+4+2+0+4+4+2\} \\
& = & \{0,2,2,0\}
\end{array}
\]
with polynomial versions $\Phi_X^{\phi,+}(L2a1)=2+x^2$ and 
$\Phi_X^{\phi,\times}(L2a1)=x^2(x-2)^2$.
\end{example}

\begin{example}
Let $X$ be the biquandle given by the operation tables
\[
\begin{array}{r|rrr}
\utr & 1 & 2 & 3 \\ \hline
1 & 2 & 2 & 1 \\
2 & 1 & 1 & 2 \\
3 & 3 & 3 & 3
\end{array}\quad
\begin{array}{r|rrr}
\otr & 1 & 2 & 3 \\ \hline
1 & 2 & 2 & 2 \\
2 & 1 & 1 & 1 \\
3 & 3 & 3 & 3
\end{array}.
\]
Then we compute via \texttt{python} that $X$ has complete 2-fares over 
$\mathbb{Z}_5$ including
\[\phi=\left[\begin{array}{rrr}
0 & 0 & 3 \\
0 & 0 & 3 \\
1 & 1 & 0
\end{array}\right]\]
(where the entry in row $j$ column $k$ is $\phi(j,k)$). 
Then the multiplicative polynomial fare values for the classical links with
up to seven crossings are given in the table.
\[
\begin{array}{r|l}
L & \Phi_X^{\phi,\times}(L) \\ \hline
L2a1 & x^5\\
L4a1 & x^5(x - 1)^4 \\
L5a1 & x^9\\
L6a1 & x^5(x - 1)^4\\
L6a2 & x^5 \\
L6a3 & x^5 \\
L6a4 & x^9(x - 1)^{18} \\
L6a5 & x^9(x - 1)^6 \\
L6n1 & x^9(x - 1)^6 \\
\end{array}\quad 
\begin{array}{r|l}
L & \Phi_X^{\phi,\times}(L) \\ \hline
L7a1 & x^9 \\
L7a2 & x^5(x - 1)^4 \\
L7a3 & x^9 \\
L7a4 & x^9 \\
L7a5 & x^5 \\
L7a6 & x^5 \\
L7a7 & x^{13}(x - 1)^2 \\
L7n1 & x^5(x - 4)^4 \\
L7n2 & x^9 \\
\end{array}
\]
\end{example}

Next we have another type of 2-fare.

\begin{definition}
Let $X$ be a finite biquandle and $A$ an abelian group. Then a \textit{through
biquandle fare} of order $n=2$ with coefficients in $A$ is a function
$\phi:X^2\to A$ satisfying the conditions
\begin{itemize}
\item[(i)] For all $x\in X$, 
\[\phi(x,x\utr x)+\phi(x\utr x,x)=0,\]
\item[(ii)] For all $x,y\in X$, 
\[\phi(x,x\utr y)+\phi(y\otr x,y)
-\phi(x\utr y,x)-\phi(y,y\otr x)=0\]
and
\item[(iii)] For all $x,y,z\in X$, 
\[\scalebox{0.95}{$\begin{array}{rcl}
\phi(x,x\utr y)
+\phi(y\otr x,y)
& & 
\phi(x,x\utr z)
+\phi(z\otr x,z)\\
+\phi(y,y\utr z)
+\phi(z\otr y,z)
&= & 
+\phi(y\otr x,(y\otr x)\utr(z\otr x)) +\phi((z\otr x)\otr(y\otr x),z\otr x) \\
+\phi(x\utr y, (x\utr y)\utr(z\otr y)) +\phi((z\otr y)\otr (x\utr y),z\otr y) 
%
& &
+\phi(x\utr z,(x\utr z)\utr(y\utr z))+\phi((y\utr z)\otr(x\utr z),y\utr z)\\
\end{array}$}\]
\end{itemize}
\end{definition}

\begin{example}
Let $X$ be the biquandle (indeed, quandle) defined by the operation tables
\[
\begin{array}{r|rrrr}
\utr & 1 & 2 & 3 & 4 \\ \hline
   1 & 1 & 3 & 4 & 2 \\ 
   2 & 4 & 2 & 1 & 3 \\
   3 & 2 & 4 & 3 & 1 \\
   4 & 3 & 1 & 2 & 4
\end{array}
\quad
\begin{array}{r|rrrr}
\otr & 1 & 2 & 3 & 4 \\ \hline
   1 & 1 & 1 & 1 & 1 \\ 
   2 & 2 & 2 & 2 & 2 \\
   3 & 3 & 3 & 3 & 3 \\
   4 & 4 & 4 & 4 & 4
\end{array}
\]
Our \textit{python} computations reveal 125 through 2-fares with $\mathbb{Z}_5$
coefficients including
\[
\begin{array}{r|rrrr}
\phi & 1 & 2 & 3 & 4 \\ \hline
1 & 0 & 0 & 3 & 0\\
2 & 0 & 0 & 3 & 0\\
3 & 2 & 2 & 0 & 2\\
4 & 0 & 0 & 3 & 0
\end{array}
\]
where the entry in row $j$ column $k$ is $\phi(j,k)$.

The figure eight knot $4_1$ has sixteen $X$-colorings including 
\[\includegraphics{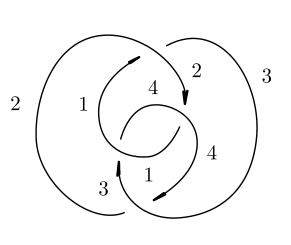}.\]
Each of the eight semi-arc starts a length-2 through path; summing the 
contributions, we obtain a total biquandle through fare of
\[\phi(2,2)-\phi(2,1)-\phi(1,1)+\phi(1,3)
+\phi(3,3)-\phi(3,4)-\phi(4,4)+\phi(4,2)=0-0-0+3+0-2-0+0=1.
\]
Repeating for the other 15 colorings, we obtain 
biquandle 2-fare multiset and polynomial invariant values
\[\Phi_X^{\phi,M}(4_1)=\{4\times 0, 6\times 1, 6\times 4 \},\
\Phi_X^{\phi,+}(4_1)=4+6x+6x^4\ \mathrm{and}\ 
\Phi_X^{\phi,\times}=x^4(x-1)^6(x-4)^4.\]
This example shows that these invariants are proper enhancements of the
(bi)quandle counting invariant since the trefoil knot $3_1$ has
the same counting invariant value of $16$ with respect to $X$
but has trivial values of the enhanced invariants,
\[\Phi_X^{\phi,M}=\{16\times 0 \},\
\Phi_X^{\phi,+}(0_1)=16\ \mathrm{and}\
\Phi_X^{\phi,\times}(0_1)=x^{16}.\]
\end{example}

\begin{example}
Let $X$ be the biquandle given by the operation tables
\[\begin{array}{r|rrrr}
\utr & 1 & 2 & 3 & 4\\ \hline
1 & 1 & 3 & 4 & 2 \\
2 & 2 & 4 & 3 & 1 \\
3 & 3 & 1 & 2 & 4 \\
4 & 4 & 2 & 1 & 3
\end{array}\quad
\begin{array}{r|rrrr}
\utr & 1 & 2 & 3 & 4 \\ \hline
1 & 1 & 1 & 1 & 1 \\
2 & 4 & 4 & 4 & 4 \\
3 & 2 & 2 & 2 & 3 \\
4 & 3 & 3 & 3 & 3
\end{array}.\]
Then we compute that $X$ has through fares over $\mathbb{Z}_6$ including
\[
\begin{array}{r|rrrr}
\phi & 1 & 2 & 3 & 4\\ \hline
1 & 3 & 1 & 4 & 1 \\
2 & 5 & 0 & 3 & 3 \\
3 & 2 & 3 & 0 & 0 \\
4 & 5 & 3 & 0 & 3
\end{array}
\]
and the multiplicative polynomial values for the prime classical knots
with up to eight crossings in the table.
\[
\begin{array}{r|l}
\Phi_X^{\phi,\times}(K) & K \\ \hline
x^4 & 5_1, 5_2, 6_1, 6_2, 6_3, 7_1, 7_4, 7_5, 7_6 7_7, 8_2, 8_3, 8_6, 8_7, 8_8, 8_9, 8_{12}, 8_{14}, 8_{16}, 8_{17} \\
x^4(x-2)^6(x-4)^6 & 4_1, 8_1, 8_{11} \\
x^{16} & 3_1, 7_2, 7_3, 8_4, 8_5, 8_{10}, 8_{13}, 8_{15}, 8_{19}, 8_{20}, 8_{21} \\
x^4(x-2)^{30}(x-4)^{30} & 8_{18}
\end{array}.
\]
In particular, we note that the invariant distinguishes several prime 
classical knots with equal counting invariant values and hence is a proper
enhancement.
\end{example}

Finally, we define a third type of 2-fare.

\begin{definition}
Let $X$ be a finite biquandle and $A$ an abelian group. Then a \textit{crooked
biquandle fare} of order $n=2$ with coefficients in $A$ is a function
$\phi:X^2\to A$ satisfying the conditions
\begin{itemize}
\item[(i)] For all $x\in X$, $\phi(x,x\utr x)+\phi(x\utr x,x)=0$,
\item[(ii)] For all $x,y\in X$, 
\[\phi(x,y)+\phi(y\otr x,x\utr y)
-\phi(y,x)-\phi(x\utr y,y\otr x)=0\]
and
\item[(iii)] For all $x,y,z\in X$, 
\[\begin{array}{rcl}
\phi(x,y)+\phi(y\otr x,x\utr y) 
& & \phi(x,z)+\phi(z\otr x,x\utr z)\\
+\phi(y,z)+\phi(z\otr y, y\utr z) 
&= & +\phi(y\otr x,z\otr x) 
+\phi((z\otr x)\otr(y\otr x),(y\otr x)\utr(z\otr x))\\
+\phi(x\utr y,z\otr y) 
+\phi((z\otr y)\otr (x\utr y),(x\utr y)\utr(z\otr y))
& & 
+\phi(x\utr z,y\utr z)
+\phi((y\utr z)\otr(x\utr z),(x\utr z)\utr(y\utr z))\\
\end{array}\]
\end{itemize}
\end{definition}

As with fares of type $n=1$, we have:

\begin{theorem}
Let $X$ be a biquandle, $L$ an oriented classical or virtual link, 
$A$ an abelian group and $\phi:X\to A$ a complete, through or crooked
biquandle $2$-fare. Then the multiset of fare values over the biquandle homset 
is invariant under Reidemeister moves.
\end{theorem}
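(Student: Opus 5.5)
We follow the strategy of the proof of Theorem \ref{thm1}. The total fare of an $X$-colored diagram is a sum over routes of order $2$, i.e.\ over pairs of semiarcs meeting at a crossing. Each such route is determined by its middle crossing together with a choice of incoming semiarc (one of two) and outgoing semiarc (one of two). We therefore rewrite the total fare as a sum of \emph{local crossing contributions}: at a crossing with incoming colors $x,y$ and outgoing colors $x\utr y, y\otr x$, the complete fare contributes $\pm[\phi(x,y)+\phi(x,x\utr y)+\phi(y\otr x,y)+\phi(y\otr x,x\utr y)]$ (with the pairing of colors fixed by the orientation convention in the coloring figure). The through fare keeps only the two terms following a strand, and the crooked fare keeps only the two turning terms. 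The sign is $-1$ exactly when the crossing is negative, since the crossing is the only internal vertex of a route of order $2$. The total fare is then a sum over crossings of these local quantities, and this is the key simplification.

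With this decomposition, invariance under a Reidemeister move only requires comparing the crossing contributions inside the move's neighborhood. Crossings outside the neighborhood keep their colors: by the biquandle axioms the unique recoloring agrees with the original outside the disk, so their contributions are unchanged. Every route of order $2$ lives at a single crossing, so no route straddles the boundary of the neighborhood in a problematic way. We then check the generating set used in Theorem \ref{thm1}: the four type I moves, the four type II moves, and the all-positive type III move, following \cite{P}.

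For type I, the single crossing has colors $x,x$ in and $x\utr x=x\otr x$ out, using axiom (i) of the biquandle. Its local contribution is exactly the left side of fare axiom (i) in each of the three cases, up to sign, so it must vanish. For type II, the two crossings have opposite signs. The positive one has inputs $x,y$; the negative one has inputs $x\utr y, y\otr x$ and outputs $x,y$ after applying invertibility of $S$. Their difference is the expression in axiom (ii). For type III, both sides have three positive crossings with inputs determined by $x,y,z$ through the exchange laws, and summing the local contributions gives exactly the two sides of axiom (iii).

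The main obstacle is bookkeeping. We must ensure that the variants of the type I and II moves (different orientations and crossing signs) yield conditions equivalent to the stated axioms, rather than new ones, since the definitions list only one condition each. To settle this, we first rewrite each variant's colors in terms of the inputs using the invertibility of $\alpha_y$, $\beta_y$ and $S$. We then observe that the resulting local sums are a reparametrization of the same expression, by substituting $(x,y)\mapsto S^{-1}(x,y)$ or by swapping the roles of over- and under-strands. We also need to confirm that at a negative crossing the route terms are listed with the same ordered pairs as at a positive one, only with an overall sign. Finally, invariance of the fare of each coloring, together with the bijection between colorings before and after a move, gives invariance of the multiset.
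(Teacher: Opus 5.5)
Your strategy of localizing the total $2$-fare to a sum of crossing terms and checking all type I and II moves plus the all-positive type III move is what the paper's one-line proof intends, and the localization itself is correct. The problem is that the whole content of the theorem is matching those local sums to the axiom expressions, and you assert this match under a crossing convention that does not produce them. If the incoming colors at a positive crossing were $x,y$ and the outgoing ones $x\utr y,y\otr x$, the four routes would be $(x,x\utr y),(x,y\otr x),(y,x\utr y),(y,y\otr x)$, not the sum you wrote. Your type II negative crossing (in $x\utr y,y\otr x$, out $x,y$) would contribute $(x\utr y,x),(x\utr y,y),(y\otr x,x),(y\otr x,y)$, which is not the negative half of axiom (ii). And ``$x,x$ in and $x\utr x$ out'' is impossible at a kink, where the loop is both incoming and outgoing.

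The printed axioms are the invariance conditions for the convention in which a crossing labeled $(x,y)$ has under-strand $x\to x\utr y$ and over-strand $y\otr x\to y$ if positive, and under-strand $x\utr y\to x$ and over-strand $y\to y\otr x$ if negative. With this convention:
\begin{itemize}
\item both crossings of each of the four type II moves carry the same label $(x,y)$, so all four give axiom (ii) with no reparametrization;
\item a kink crossing is labeled $(x,x)$, with strand and loop colors $x$ and $x\utr x$ in some order;
\item the two sides of type III are labeled $(x,y),(y,z),(x\utr y,z\otr y)$ and $(x,z),(y\otr x,z\otr x),(x\utr z,y\utr z)$, with boundary colors agreeing by the exchange laws.
\end{itemize}
You need to carry out this check rather than assert it.

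When you do, the crooked type I step fails. Whatever the convention, at a kink with strand color $c$ and loop color $\ell$ the through routes are $(c,\ell),(\ell,c)$ and the crooked routes are $(c,c),(\ell,\ell)$. So the crooked condition is $\phi(x,x)+\phi(x\utr x,x\utr x)=0$, whereas the printed crooked axiom (i) is $\phi(x,x\utr x)+\phi(x\utr x,x)=0$, a copy of the through axiom. Your claim that the kink contribution is ``exactly the left side of fare axiom (i) in each of the three cases'' is therefore false for crooked fares, and the crooked case of the statement fails as printed.

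For a counterexample, take $X=\{1,2\}$ with $x\utr y=x\otr y=\sigma(x)$ for the transposition $\sigma$, $A=\mathbb{Z}$, $\phi(1,1)=1$ and $\phi=0$ otherwise. The crossing term $\phi(a,b)+\phi(b\otr a,a\utr b)$ equals $1$ if $a=b$ and $0$ otherwise, so the printed crooked (i)--(iii) all hold. Yet a one-crossing positive-kink diagram of the unknot has crooked fare multiset $\{1,1\}$, while the crossingless diagram has $\{0,0\}$. Your argument proves the crooked case only after axiom (i) is corrected to $\phi(x,x)+\phi(x\utr x,x\utr x)=0$.
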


\begin{proof}
The proof is a check of invariance under Reidemeister moves analogous to 
the proof of Theorem \ref{thm1}.
\end{proof}

For the case of fares of order $2$, a choice of through fare $\phi_1$
and crooked fare $\phi_2$ add to yield a complete fare $\phi_1+\phi_2$.
The converse is not true in general -- a complete fare need not decompose
as a sum of a through fare and a crooked fare. We will say a complete
2-fare is \textit{decomposable} if it is a sum of a crooked fare and a
through fare and \textit{indecomposable} otherwise.
\[\includegraphics{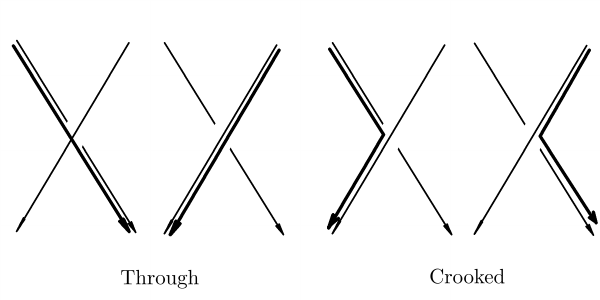}\]

\begin{example}
Let $X$ be the biquandle given by the operation tables
\[
\begin{array}{r|rrrr}
\utr & 1 & 2 & 3 & 4\\ \hline
   1 & 1 & 3 & 4 & 2\\ 
   2 & 2 & 4 & 1 & 3\\
   3 & 1 & 3 & 2 & 4\\
   4 & 4 & 2 & 3 & 1
\end{array}
\quad
\begin{array}{r|rrrr}
\otr & 1 & 2 & 3 & 4\\ \hline
   1 & 1 & 3 & 4 & 2\\ 
   2 & 2 & 4 & 1 & 3\\
   3 & 1 & 3 & 2 & 4\\
   4 & 4 & 2 & 3 & 1
\end{array}
\]

We use Python to compute that there are 3125 crooked biquandle 2-fares over 
$\mathbb{Z}_5$. One such example is 
\[
\begin{array}{r|rrrr}
\phi & 1 & 2 & 3 & 4\\ \hline
   1 & 1 & 3 & 4 & 2\\
   2 & 3 & 1 & 1 & 0\\
   3 & 0 & 2 & 4 & 2\\
   4 & 1 & 4 & 3 & 4
\end{array}
\]
where, as in Example 7, $\phi (j,k)$ is the entry in the $k$th column of 
the $j$th row.
There are 64 total X-colorings of the link L7a7, including the following:

\[\includegraphics{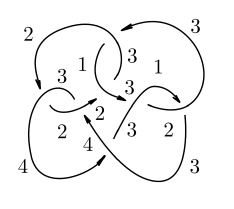}.\]

There are 7 crossings, and thus, 14 distinct crooked 2-routes in each 
$X$-coloring. Summing the fares over these, we obtain the crooked fare 
of this coloring to be:
\begin{eqnarray*}
\sum_{\mathrm{2-routes}} \phi(x,y) & = &
\phi(2,4) + \phi(3,2) +\phi(3,1) +\phi(3,2) +\phi(2,3)+ \phi(1,3) +\phi(4,2) 
 \\ & & 
+\phi(2,3) -\phi(3,1) -\phi(3,2) -\phi(1,3) -\phi(2,3) -\phi(4,4) -\phi(3,3) \\
& = & 4+1+4+1+2+0+0+2-4-1-0-2-4-4\\
& = & 4.
\end{eqnarray*}

Repeating for the other 63 colorings, we find that 12 have a fare of 4, 
24 have a fare of 2, and the remaining 36 have a fare of 0. Thus, we have 
the multiplicative polynomial invariant value 
\[\Phi_X^{\phi,\times}(L7a7)=x^{36}(x-2)^{24}(x-4)^{12}.\]

\end{example}

\section{\large\textbf{Questions}}\label{Q}

We conclude with some questions and directions for future research.

It is important first of all to note that the examples we have computed
here were chosen as small easily computable toy examples and should not 
be taken as representative of the power of this infinite family of 
invariants. Our examples use small cardinality biquandles with small 
cardinality finite ring coefficients; larger biquandles and larger finite
and infinite coefficient rings are expected to yield stronger invariants.

We have considered only the cases of 1-fares and 2-fares; the next 
cases to tackle should be 3-fares, 4-fares, et cetera. These cases pose
a technical challenge in that to find all possible cases we must expand the
neighborhood of each move to include more crossings. Nonetheless, $n$-fares
are definable for every natural number $n$.

Finally, we note that a crooked 2-fare is very similar to a 2-cocycle; this
inspires the question of what is the algebraic structure of the set of all
$n$-fares of a particular type (crooked, complete, through, etc.) for a 
given choice of biquandle and coefficient ring. Is there an underlying 
homology theory as with quandle/Yang-Baxter homology? What is the relation,
if any, with biquandle brackets?

\bigskip

\bibliographystyle{abbrv}
\bibliography{sn-ss2}

\noindent
\textsc{Department of Mathematical Sciences \\
Claremont McKenna College \\
850 Columbia Ave. \\
Claremont, CA 91711}

\end{document}